\numberwithin{equation}{section}
\renewcommand*{\backref}[1]{}  
\renewcommand*{\backrefalt}[4]{
  \ifcase #1 %
  \relax
  \or
(Cited page~#2.)%
  \else
(Cited pages~#2.)%
  \fi}
\newtheorem{theorem}{Theorem}[section]
\newaliascnt{cor}{theorem}
\newaliascnt{prop}{theorem}
\newtheorem{prop}[prop]{Proposition}
\newaliascnt{lemma}{theorem}
\newtheorem{lemma}[lemma]{Lemma}
\theoremstyle{definition}
\newaliascnt{defi}{theorem}
\newtheorem{defi}[defi]{Definition}
\newaliascnt{example}{theorem}
\theoremstyle{remark}
\newaliascnt{remark}{theorem}
\newtheorem{remark}[remark]{Remark}
\newcommand{\ie}{\textit{i.e.} \/}
\newcommand{\numberset}[1]{\mathbb{#1}}
\newcommand{\nat}{\numberset{N}}
\newcommand{\rational}{\numberset{Q}}
\newcommand{\korps}{\numberset{K}}
\DeclareMathOperator{\Kr}{Ker}
\DeclareMathOperator{\Img}{Im}
\DeclareMathOperator{\Hom}{Hom}
\DeclareMathOperator{\inc}{\textsf{inc}}
\DeclareMathOperator{\pr}{\textsf{pr}}
\DeclareMathOperator{\un}{\mathbf{1}}
\DeclareMathOperator{\Hoch}{Hoch}
\DeclareMathOperator{\Hchains}{C}
\DeclareMathOperator{\Hcochains}{CHoch}
\DeclareMathOperator{\Harr}{Harr}
\DeclareMathOperator{\Harrchains}{Ch}
\DeclareMathOperator{\Harrcochains}{CHarr}
\DeclareMathOperator{\gradcomplex}{C}
\DeclareMathOperator{\gradcohom}{H}
\DeclareMathOperator{\Tens}{\mathcal{T}\!}
\DeclareMathOperator{\Sym}{\mathcal{S}\!}
\newcommand{\phint}{*} 
\newcommand{\varstar}{*}
\title{A formality framework \\ for commutative deformations}
\author{Olivier Elchinger \thanks{The author has been fully supported in the frame of the AFR scheme of the Fonds National
de la Recherche (FNR), Luxembourg with the project QUHACO 8969106}}
\begin{document}

\maketitle

\begin{abstract}
In this article, we use Harrison cohomology to provide a framework for commutative deformations. In particular, Kontsevich's result that formality of (the Hochschild complex of) an associative algebra implies its deformability is adapted for commutative algebras, with the Harrison complex.
\end{abstract}

\noindent {\bf Keywords}: formality, Harrison cohomology, commutative deformations, eulerian idempotents

\noindent {\bf 2010 AMS Subject Classification}: 13D03, 13D10, 16T10

\section{Introduction}

Kontsevich showed in \cite{K03} the existence of an associative deformation quantization for the general case of smooth Poisson manifolds. He deduced this result from his general ``formality statement''. Endowed with the Gerstenhaber bracket, the continuous Hochschild complex of the algebra $A = \mathcal{C}^\infty(M)$ of smooth functions over a Poisson manifold admits a graded Lie algebraic structure, which controls the deformations of the associative commutative algebra $A$. Kontsevich shows that this complex is linked with its cohomology -- which therefore controls the same deformations -- by a $L_\infty$-quasi-isomorphism, called a formality map.

Considering formality, the case of smooth manifolds is thus rather well understood using continuous Hochschild cohomology, and it is this tool which gives a lot of information about deformability (obstructions, rigidity,\ldots). Moreover, if
the Hochschild complex of an associative algebra is formal in Kontsevich's sense, this algebra admits a quantization by deformation, but the converse does not hold, for example in the case of free algebras, see \cite{El12}.

Since formality methods work well to give complete answers to the deformation quantization in the regular case (both $\mathcal{C}^\infty$ and algebraic) it seems to be interesting -- as proposed by Frønsdal
and Kontsevich in \cite{Fron01,FronKon07} -- to look at the deformation quantization problem for more general singular Poisson manifolds.
The main problem is the fact that the HKR result of a ``simple'' Hochschild cohomology of the algebra of functions, generated by derivations, no longer holds, for example there may be non-trivial 2-cocycles which are symmetric. These symmetric cocycles are infinitesimal \emph{commutative deformations} of the algebra of functions.
In order to systematically investigate commutative associative algebras, Harrison (\cite{Barr68,Harr62}) described combinatorially the ``commutative component'' of the Hochschild complex, and proved that its cohomology is reduced to derivations if and only if the algebra is ``regular''.

\bigskip

The main goal of this work is to adapt the result that formality implies deformation to the case of a commutative algebra, replacing Hochschild complex by Harrison complex.

I am grateful to Prof. Bordemann for his help and useful remarks.

In \autoref{Sec:HochHarr} we recall Hochschild and Harrison (co)homology. \autoref{Sec:bialg} and \autoref{Sec:Euler-idempotents} introduce tools coming from Hopf algebra theory: (co)freeness, convolution products, eulerian idempotents. This gives two descriptions of the Harrison complex, providing a short proof of a result of Barr. Finally, \autoref{Sec:CommDef} presents commutative deformations, and the aforementioned result \autoref{Thm:formality-deformation}. \\

\bigskip

Let $\korps$ be a field containing the rationals.

\section{Hochschild and Harrison (co)homology} \label{Sec:HochHarr}

Let $A$ be a commutative $\korps$-algebra, and consider its Hochschild complex $\Hchains_\phint(A)$ with $\Hchains_n(A) = A \otimes A^{\otimes n}$.

Loday recalls in \cite[4.2]{Lo98} the action of the symmetric group $\mathfrak{S}_n$ on $\Hchains_n(A)$
\begin{align*}
\mathfrak{S}_n \curvearrowright \Hchains_n(A) & \to \Hchains_n(A) \\
\sigma.(a_0,a_1,\dotsc,a_n) & = (a_0,a_{\sigma^{-1}(1)},\dotsc,a_{\sigma^{-1}(n)})
\end{align*}
as well as the shuffle product
\begin{align*}
sh_{p,q} : \Hchains_p(A) \times \Hchains_q(A) & \to \Hchains_{p+q}(A) \\
(a_0,a_1,\dotsc,a_p) \bullet (a'_0,a_{p+1},\dotsc,a_{p+q}) & = \sum_{\sigma \in Sh_{p,q}} \operatorname{sgn}(\sigma) \sigma.(a_0 a'_0,a_1,\dotsc,a_{p+q})
\end{align*}
where $Sh_{p,q}$ are the $(p,q)$-shuffles, elements $\sigma$ of $\mathfrak{S}_{p+q}$ such that $\sigma(1)<\dotsb<\sigma(p)$ and $\sigma(p+1)<\dotsb<\sigma(p+q)$ ; and he also defines the shuffle map
\begin{equation*}
sh = \sum_{\substack{p+q = n \\ p\geqslant 1,q\geqslant 1}}sh_{p,q} : \Hchains_n(A) \to \Hchains_n(A) \quad \text{as the action of the element} \quad sh = \sum_{\substack{\sigma\in Sh_{p,q} \\ p+q = n \\ p\geqslant 1,q\geqslant 1}} \operatorname{sgn}(\sigma) \sigma \in \korps[\mathfrak{S}_n]
\end{equation*}

Endowed with the shuffle product (often noted $\bullet$), the Hochschild complex is a commutative differential graded algebra augmented over $A$. Let $I=\bigoplus_{n>0} \Hchains_n(A)$ be the augmentation ideal. The quotient $\Harrchains(A) =  \Hchains(A)/I^{\bullet 2}$ is a well defined complex since the Hochschild boundary map is a graded derivation for the shuffle product.

For any $A$-module $M$, Hochschild homology and cohomology are given by $\Hoch_\phint(A,M) = H(\Hchains_\phint(A)\otimes_A M)$ and $\Hoch^\phint(A,M) = H(\Hom_A(\Hchains_\phint(A),M))$. The Harrison homology and cohomology are defined as $\Harr_\phint(A,M) = H(\Harrchains_\phint(A)\otimes_A M)$ and $\Harr^\phint(A,M) = H(\Hom_A(\Harrchains_\phint(A),M))$.

Barr already proved in \cite[Theorem 1.1]{Barr68} that there are maps $\Hoch_\phint(A,M) \twoheadrightarrow \Harr_\phint(A,M)$ and $\Harr^\phint(A,M) \hookrightarrow \Hoch^\phint(A,M)$.

\section{Tensorial bialgebras} \label{Sec:bialg}

Let $V$ be a $\korps$-vector space. The tensorial module over $V$ is given by $\Tens V = \bigoplus_{n \in \nat} V^{\otimes n}$.

\subsection{Freeness and cofreeness}

Endowed with the multiplication $\mu$ of concatenation, $(\Tens V,\mu,\un)$ is the free associative algebra over $V$, characterized (up to isomorphism) by the universal property that each morphism $\phi$ from $V$ to an associative algebra $(A,\mu_A)$ factors through $\Tens V$ in $\phi = \inc_V \circ \overline{\phi}$.

Endowed with the comultiplication $\Delta$ of deconcatenation, $(\Tens V,\Delta,\varepsilon)$ is the cofree coassociative conilpotent coalgebra over $V$, characterized (up to isomorphism) by the universal property that each morphism $\phi$ from a coaugmented conilpotent coalgebra $(C,\Delta_C)$ to $V$, \ie satisfying $\phi(1)=0$, factors through $\Tens V$ in $\phi = \pr_V \circ \overline{\phi}$.

Likewise, for any linear map $d : V \to A$, there exists a unique graded derivation along $\overline{\phi}$ noted $\overline{d} : \Tens V \to A$ such that $\overline{d}|_V = d$
; and for any linear map $d : C^+ \to V$ (with $C^+ = \Kr \varepsilon_C$), there exists a unique graded coderivation along $\overline{\phi}$ noted $\overline{d} : C \to \Tens V$ such that $\pr_V \circ \overline{d} = d$.
%
%
\begin{align*}
\begin{tikzpicture}[>=angle 90,baseline=(current bounding box.center)]
\matrix (m) [matrix of math nodes,nodes in empty cells,column sep=2em,row sep=2em,text height=1.5ex,text depth=0.25ex,ampersand replacement=\&]
{(\Tens V,\mu) \& \& (A,\mu_A) \\ \& V \\};
\path[->]
(m-1-1) edge node [above] {$\overline{\phi},\overline{d}$} (m-1-3);
\path[right hook->]
(m-2-2) edge (m-1-1);
\path[->]
(m-2-2) edge node [below right] {$\phi,d$} (m-1-3);
\end{tikzpicture}
& &
\begin{tikzpicture}[>=angle 90,baseline=(current bounding box.center)]
\matrix (m) [matrix of math nodes,nodes in empty cells,column sep=2em,row sep=2em,text height=1.5ex,text depth=0.25ex,ampersand replacement=\&]
{(\Tens V,\Delta) \& \& (C,\Delta_C) \\ \& V \\};
\path[->]
(m-1-3) edge node [above] {$\overline{\phi},\overline{d}$} (m-1-1);
\path[->>]
(m-1-1) edge (m-2-2);
\path[->]
(m-1-3) edge node [below right] {$\phi,d$} (m-2-2);
\end{tikzpicture}
\\
\overline{d} \circ \mu = \mu_A \circ (\overline{d} \otimes \overline{\phi} + \overline{\phi} \otimes \overline{d})
& &
\Delta \circ \overline{d} = (\overline{d} \otimes \overline{\phi} + \overline{\phi} \otimes \overline{d}) \circ \Delta_C
\end{align*}

More details on these structures can be found in \cite{LV12}. The emphasis is put on the following formulas using convolutions products. For both the algebra and coalgebra setting the formulas are the same, only the convolution products changes. For more detailed proofs, see \cite{El12}.

The algebra morphism $\overline{\phi}$ induced by $\phi : V \to A$ is computed as $\overline{\phi} = \sum_{n \in \nat} \phi^{\star n}$, the geometric serie using the convolution product $\star$ with respect to the multiplication $\mu_A$ and the comultiplication of deconcatenation $\Delta$. The derivation $\overline{d}$ along $\overline{\phi}$ induced by $d$ and $\phi$ can be computed as $\overline{d} = \overline{\phi} \star d \star \overline{\phi}$.

The coalgebra morphism $\overline{\phi}$ coinduced by $\phi : C^+ \to V$ is computed as $\overline{\phi} = \sum_{n \in \nat} \phi^{\star n}$, the geometric serie using the convolution product $\star$ with respect to the multiplication of concatenation $\mu$ and the comultiplication $\Delta_C$. The coderivation $\overline{d}$ along $\overline{\phi}$ coinduced by $d$ and $\phi$ can be computed as $\overline{d} = \overline{\phi} \star d \star \overline{\phi}$. \\

Moreover, $(\Tens V,\mu,\Delta_{sh},\un,\varepsilon)$ is a bialgebra, $\Delta_{sh}$ being the morphism of associative algebras $\Delta_{sh} : (\Tens V,\mu) \to (\Tens V \otimes \Tens V,\mu^{[2]})$ induced by $\inc_V \otimes \un + \un \otimes \inc_V$.

Also, $(\Tens V,\mu_{sh},\Delta,\un,\varepsilon)$ is a bialgebra, $\mu_{sh}$ being the morphism of coassociative coalgebras $\mu_{sh} : (\Tens V \otimes \Tens V,\Delta^{[2]}) \to (\Tens V,\Delta)$ coinduced by $\pr_V \otimes \varepsilon + \varepsilon \otimes \pr_V$.


The shuffle product can also be seen as the commutative product resulting on the quotient $\Sym V = \Tens V/(x \otimes y - (-1)^{|x| |y|} y \otimes x,\quad x,y \in \Tens V)$. Since $\Delta_{sh}$ is cocommutative, it factors through the quotient, and thus $(\Sym V,\mu_{sh},\Delta_{sh},\un,\varepsilon)$ is also a bialgebra.

\subsection{Toolbox on operations}

In this section, we present some relations between product, composition, convolution and counit which will be used later. Let $\alpha : V \to V$ (or $\alpha_i$) be a linear map.

Since $V \otimes \korps \cong V$, we have that $\mu_{sh} \circ (\alpha \otimes \un \varepsilon) = \alpha \otimes \varepsilon$ are the same map from $V \otimes V \to V$ since they send $a \otimes b \mapsto \alpha(a)\varepsilon(b)$.

Let $\phi : \Tens V \otimes \Tens V \to \Tens V$ be a coalgebra morphism, meaning that $\Delta \circ \phi = (\phi \otimes \phi) \circ \Delta^{[2]}$, with $\Delta^{[2]} = (id \otimes \tau \otimes id) \circ (\Delta \otimes \Delta)$, that is $\Delta^{[2]} = \operatorname{perm} \circ (\Delta \otimes \Delta)$, the coproduct on each factor followed by a permutation so that the morphism is applied to the right elements.
We have
\begin{align*}
(\alpha_1 \star \alpha_2) \circ \phi & {}= \mu_{sh} \circ (\alpha_1 \otimes \alpha_2) \circ \Delta \circ \phi \\
& {}= \mu_{sh} \circ (\alpha_1 \otimes \alpha_2) \circ (\phi \otimes \phi) \circ \Delta^{[2]} \\
& {}= \mu_{sh} \circ (\alpha_1 \circ \phi \otimes \alpha_2 \circ \phi) \circ \Delta^{[2]} = (\alpha_1 \circ \phi) \star_2 (\alpha_2 \circ \phi),
\end{align*}
with the convolution product $\_ \star_2 \_ = \mu_{sh} \circ (\_ \otimes \_)\circ \Delta^{[2]}$.
The property of coalgebra morphism also reads $\Delta^{(n-1)} \circ \phi = \phi^{\otimes n} \circ \Delta^{[n]}$, where $\Delta^{(n-1)} : \Tens V \to \Tens V^{\otimes n}$ is the $(n-1)$-fold of the associative coproduct, and $\Delta^{[n]} = \operatorname{perm} \circ (\Delta^{(n-1)} \otimes \Delta^{(n-1)})$. We have
\begin{equation*}
(\alpha_1 \star \dotsb \star \alpha_n) \circ \phi = (\alpha_1 \circ \phi) \star_n \dotsb \star_n (\alpha_n \otimes \phi) \qquad \text{with}\quad
\begin{aligned}
\_\star\_ \dotsb \_\star \_ = \mu_{sh} \circ (\_\otimes\_ \dotsb \_\otimes\_) \circ \Delta^{(n-1)} \\
\_\star_n\_ \dotsb \_\star_n\_ = \mu_{sh} \circ (\_\otimes\_ \dotsb \_\otimes\_) \circ \Delta^{[n]}
\end{aligned}
\end{equation*}
and we will write collectively $\varstar$ for those second kind of convolutions. Taking $\alpha_i = \alpha$ and summing the previous equalities gives
\begin{equation*}
e^{\star \alpha} \circ \phi = e^{\varstar(\alpha \circ \phi)}.
\end{equation*}

Using this with $\phi = id \otimes \varepsilon$, which indeed is a coalgebra morphism, we obtain
\begin{gather*}
(\alpha_1 \star \alpha_2) \otimes \varepsilon = (\alpha_1 \star \alpha_2) \circ \phi = (\alpha_1 \circ \phi) \varstar (\alpha_2 \circ \phi) = (\alpha_1 \otimes \varepsilon) \varstar (\alpha_2 \otimes \varepsilon)
\intertext{and thus}
e^{\star \alpha} \otimes \varepsilon = e^{\varstar(\alpha \otimes \varepsilon)}.
\end{gather*}

\section{Eulerian idempotents} \label{Sec:Euler-idempotents}

Following Loday and Vallette \cite[4.5]{Lo98} and \cite[1.3.11]{LV12}, we define the eulerian idempotents on the commutative Hopf algebra $(\Tens V,\mu_{sh},\Delta,\un,\varepsilon)$. We consider its convolution algebra $(\Hom(\Tens V,\Tens V),\star,\un \varepsilon)$, where the convolution product is $\_ \star \_ = \mu_{sh} \circ (\_ \otimes \_)\circ \Delta$. We write $id = \un \varepsilon + J$ so that $J$ is the identity on $V^{\otimes n}$ except for $n = 0$ on which it is $0$. We define
\begin{equation*}
e^{(1)} \coloneqq \log^\star(id) = \log^\star(\un \varepsilon + J) = \sum_{n \geqslant 1} (-1)^{n+1} \frac{J^{\star n}}{n}
\end{equation*}

In weight $n$ we get that $e^{(1)} : V^{\otimes n} \to V^{\otimes n}$ is given by $e^{(1)}(x_1 \dotsm x_n) = e_n^{(1)} \cdot (x_1 \dotsm x_n)$ for some uniquely defined element $e_n^{(1)} \in \rational[\mathfrak{S}_n]$. These elements are called the first \emph{eulerian idempotents}. For $i \geqslant 1$, we define
\begin{equation*}
e^{(i)} \coloneqq \frac{(e^{(1)})^{\star i}}{i!}
\end{equation*}

Loday shows \cite[Proposition 4.5.3]{Lo98} that the elements $e_n^{(i)} \in \rational[\mathfrak{S}_n]$ are orthogonal idempotents.

In low dimensions, the eulerian idempotents are
\begin{align*}
n=1 \qquad & e_1^{(1)} = id; \\
n=2 \qquad & e_2^{(1)} = \frac{1}{2} \big(id + (1 2)\big), \quad e_2^{(2)} = \frac{1}{2} \big(id - (1 2)\big); \\
n=3 \qquad & e_3^{(1)} = \frac{1}{3}id - \frac{1}{6}\big((1 2 3)+(1 3 2)-(1 2)-(2 3)\big) - \frac{1}{3}(1 3), \\
& e_3^{(2)} = \frac{1}{2} \big(id+(1 3)\big), \quad e_3^{(3)} = \frac{1}{6}\big(id+(1 2 3)+(1 3 2)-(1 2)-(2 3)-(1 3)\big).
\end{align*}

\begin{prop}
The first eulerian idempotent $e^{(1)}$ is a derivation for $\mu_{sh}$ along $\un \varepsilon$, \ie
\begin{equation*}
e^{(1)} \circ \mu_{sh} = \mu_{sh} \circ (e^{(1)} \otimes \un \varepsilon + \un \varepsilon \otimes e^{(1)}).
\end{equation*}
\end{prop}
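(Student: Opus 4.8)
The plan is to show that both sides reduce to one and the same element, namely $\log^\varstar(\mu_{sh})$, the logarithm of $\mu_{sh}$ computed in the convolution algebra $(\Hom(\Tens V\otimes\Tens V,\Tens V),\varstar)$, whose unit is the map $\un(\varepsilon\otimes\varepsilon)\colon a\otimes b\mapsto\varepsilon(a)\varepsilon(b)\un$. The engine throughout is the toolbox identity $(\alpha_1\star\dotsb\star\alpha_n)\circ\phi=(\alpha_1\circ\phi)\varstar\dotsb\varstar(\alpha_n\circ\phi)$, valid for any coalgebra morphism $\phi$, which I will apply to the three coalgebra morphisms $\mu_{sh}$, $id\otimes\varepsilon$ and $\varepsilon\otimes id$. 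Since $e^{(1)}=\log^\star(id)=\sum_{n\geqslant1}\frac{(-1)^{n+1}}{n}J^{\star n}$ is a locally finite series in the $\star$-convolution, each of these applications transports $\log^\star$ into $\log^\varstar$ term by term.

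For the left-hand side I would take all $\alpha_i=J$ and $\phi=\mu_{sh}$, giving $J^{\star n}\circ\mu_{sh}=(J\circ\mu_{sh})^{\varstar n}$. As $\mu_{sh}$ is a coalgebra morphism it preserves counits, so $\varepsilon\circ\mu_{sh}=\varepsilon\otimes\varepsilon$ and therefore $\un\varepsilon\circ\mu_{sh}=\un(\varepsilon\otimes\varepsilon)$ is exactly the $\varstar$-unit. Hence $J\circ\mu_{sh}=\mu_{sh}-\un(\varepsilon\otimes\varepsilon)$, and summing the series gives $e^{(1)}\circ\mu_{sh}=\log^\varstar(\mu_{sh})$.

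For the right-hand side I would first use that $\un$ is the shuffle unit to simplify $\mu_{sh}\circ(e^{(1)}\otimes\un\varepsilon)$, which sends $a\otimes b\mapsto\varepsilon(b)\,e^{(1)}(a)$ and thus equals $e^{(1)}\circ(id\otimes\varepsilon)$, written $e^{(1)}\otimes\varepsilon$; symmetrically $\mu_{sh}\circ(\un\varepsilon\otimes e^{(1)})=\varepsilon\otimes e^{(1)}$. The specialised identity $(\alpha_1\star\alpha_2)\otimes\varepsilon=(\alpha_1\otimes\varepsilon)\varstar(\alpha_2\otimes\varepsilon)$ and its mirror then give $e^{(1)}\otimes\varepsilon=\log^\varstar(id\otimes\varepsilon)$ and $\varepsilon\otimes e^{(1)}=\log^\varstar(\varepsilon\otimes id)$, so the right-hand side equals $\log^\varstar(id\otimes\varepsilon)+\log^\varstar(\varepsilon\otimes id)$. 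To match this with $\log^\varstar(\mu_{sh})$ I would verify the factorisation $(id\otimes\varepsilon)\varstar(\varepsilon\otimes id)=\mu_{sh}$: applying $\Delta^{[2]}$ and then $(id\otimes\varepsilon)\otimes(\varepsilon\otimes id)$ collapses by the counit axioms to the identity on $\Tens V\otimes\Tens V$, after which $\mu_{sh}$ closes the convolution. Swapping the two factors yields instead $\mu_{sh}\circ\tau$, which equals $\mu_{sh}$ by commutativity, so $id\otimes\varepsilon$ and $\varepsilon\otimes id$ commute for $\varstar$ and $\log^\varstar$ is additive on them: $\log^\varstar(\mu_{sh})=\log^\varstar(id\otimes\varepsilon)+\log^\varstar(\varepsilon\otimes id)$.

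I expect the genuine obstacle to be this last commutativity step, which is exactly the place where the \emph{commutativity} of the Hopf algebra $(\Tens V,\mu_{sh},\Delta,\un,\varepsilon)$ is indispensable: without $\mu_{sh}\circ\tau=\mu_{sh}$ the two factors would not commute, $\log^\varstar$ of their product would pick up Baker--Campbell--Hausdorff corrections, and the two summands would fail to add. The only other delicate point is bookkeeping of the unit terms, ensuring that $\un\varepsilon\circ\mu_{sh}$, $\un\varepsilon\circ(id\otimes\varepsilon)$ and $\un\varepsilon\circ(\varepsilon\otimes id)$ are all recognised as the $\varstar$-unit, so that every logarithm is formed around the correct constant term.
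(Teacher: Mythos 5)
Your proof is correct and is essentially the paper's own argument read through the logarithm instead of the exponential: the paper shows $e^{\varstar(e^{(1)}\otimes\varepsilon+\varepsilon\otimes e^{(1)})}=(id\otimes\varepsilon)\varstar(\varepsilon\otimes id)=\mu_{sh}=e^{\varstar(e^{(1)}\circ\mu_{sh})}$ using the same toolbox transport along the coalgebra morphisms $\mu_{sh}$ and $id\otimes\varepsilon$, and the same crux, namely that $\alpha\otimes\varepsilon$ and $\varepsilon\otimes\alpha$ commute for $\varstar$ via the graded commutativity of the shuffle product. You have correctly identified that commutativity as the indispensable step, so no further changes are needed.
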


\begin{proof}
Writing $\mu_{sh} = id \circ \mu_{sh} = e^{\star e^{(1)}} \circ \mu_{sh} = e^{\varstar(e^{(1)} \circ \mu_{sh})}$, we will show that $e^{\varstar(e^{(1)} \otimes \varepsilon + \varepsilon \otimes e^{(1)})} = e^{\varstar(e^{(1)} \circ \mu_{sh})}$, which gives the result since $e^{(1)} \otimes \varepsilon + \varepsilon \otimes e^{(1)} = \mu_{sh} \circ (e^{(1)} \otimes \un \varepsilon + \un \varepsilon \otimes e^{(1)})$.

We have $e^{\varstar(A+B)} = e^{\varstar A}\varstar e^{\varstar B}$, providing $A \varstar B = B \varstar A$. Set $A =\alpha \otimes \varepsilon$ and $B = \varepsilon \otimes \alpha$, with $\alpha$ a linear map. Let $a,b \in \Tens V$, we use Sweedler notation $\Delta(a) = \sum_{(a)} a^{(1)} \otimes a^{(2)}$. Note that $|a| = |a^{(1)}| + |a^{(2)}|$ for any elements $a^{(1)},a^{(2)}$ of the sum. We have
\begin{align*}
\big( \alpha \otimes \varepsilon \varstar \varepsilon \otimes \alpha \big)(a \otimes b) & {}= \mu_{sh} \circ (\alpha \otimes \varepsilon \otimes \varepsilon \otimes \alpha) \circ \Delta^{[2]}(a \otimes b) \\
& {}= \mu_{sh} \left(\sum_{(a),(b)} (-1)^{|a^{(2)}| |b^{(1)}|} \alpha(a^{(1)}) \varepsilon(b^{(1)}) \otimes \varepsilon(a^{(2)}) \alpha(b^{(2)}) \right) \\
& {}= \mu_{sh} \left(\sum_{(a),(b)} \alpha\big(a^{(1)} \varepsilon(a^{(2)})\big) \otimes \alpha\big(b^{(2)} \varepsilon(b^{(1)})\big) \right) \\
& = \alpha(a) \bullet \alpha(b)
\end{align*}
since terms with elements $a^{(2)}$ or $b^{(1)}$ of degree different from zero are killed by the counity; and since $|a^{(2)}| = |a| - |a^{(1)}|$, $|b^{(1)}| = |b| - |b^{(2)}|$,
\begin{align*}
\big( \varepsilon \otimes \alpha \varstar \alpha \otimes \varepsilon \big)(a \otimes b) & {}= \mu_{sh} \circ (\varepsilon \otimes \alpha \otimes \alpha \otimes \varepsilon) \circ \Delta^{[2]}(a \otimes b) \\
& {}= \mu_{sh} \left(\sum_{(a),(b)} (-1)^{|a^{(2)}| |b^{(1)}|} \varepsilon(a^{(1)}) \alpha(b^{(1)}) \otimes \alpha(a^{(2)}) \varepsilon(b^{(2)}) \right) \\
& {}= (-1)^{|a| |b|} \alpha(b) \bullet \alpha(a) = \alpha(a) \bullet \alpha(b)
\end{align*}
Taking $\alpha = e^{(1)}$, we thus have
\begin{align*}
e^{\varstar(e^{(1)} \otimes \varepsilon + \varepsilon \otimes e^{(1)})} & {}= e^{\varstar(e^{(1)} \otimes \varepsilon)} \varstar e^{\varstar(\varepsilon \otimes e^{(1)})} = \left( e^{\star e^{(1)}} \otimes \varepsilon \right) \varstar \left( \varepsilon \otimes e^{\star e^{(1)}} \right) \\
& {}= (id \otimes \varepsilon) \varstar (\varepsilon \otimes id) = \mu_{sh} \circ (id \otimes \varepsilon \otimes \varepsilon \otimes id) \circ \Delta^{[2]} = \mu_{sh} \circ (id \otimes id) = \mu_{sh} = e^{\varstar(e^{(1)} \circ \mu_{sh})}.
\end{align*}
\end{proof}

This proposition implies the equivalence of the two original definitions of Harrison (co)homology of a commutative algebra $A$ as given by Harrison \cite{Harr62} and Barr \cite{Barr68}.

\begin{theorem}
The complexes $\Harrchains(A) = \Hchains(A)/I^{\bullet 2}$ and $e^{(1)} \Hchains(A)$ are isomorphic.
\end{theorem}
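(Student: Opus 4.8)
The plan is to identify the quotient $\Harrchains(A)=\Hchains(A)/I^{\bullet 2}$ with the image of the idempotent $e^{(1)}$. Since each $e^{(1)}_n\in\rational[\mathfrak{S}_n]$ is idempotent, in every weight one has a splitting $\Hchains(A)=\Img(e^{(1)})\oplus\Kr(e^{(1)})$, so it suffices to prove that $\Kr(e^{(1)})$ agrees with $I^{\bullet 2}$ in positive weight. Granting this, in each positive weight the composite $e^{(1)}\Hchains(A)\hookrightarrow\Hchains(A)\twoheadrightarrow\Hchains(A)/I^{\bullet 2}$ is a bijection, its inverse being induced by $e^{(1)}$ itself, which descends to the quotient precisely because it annihilates $I^{\bullet 2}$.

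First I would establish $I^{\bullet 2}\subseteq\Kr(e^{(1)})$, which is an immediate consequence of the preceding Proposition. For $x,y$ in the augmentation ideal $I=\Kr\varepsilon$ one has $\un\varepsilon(x)=\un\varepsilon(y)=0$, hence
\[
e^{(1)}(x\bullet y)=\mu_{sh}\circ(e^{(1)}\otimes\un\varepsilon+\un\varepsilon\otimes e^{(1)})(x\otimes y)=0,
\]
so $e^{(1)}$ kills all of $I^{\bullet 2}=I\bullet I$.

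For the reverse inclusion I would exploit the definition $e^{(i)}=(e^{(1)})^{\star i}/i!$ together with the formula $\_\star\dotsb\star\_=\mu_{sh}\circ(\_\otimes\dotsb\otimes\_)\circ\Delta^{(i-1)}$. It shows that, writing $\Delta^{(i-1)}(x)=\sum x_{(1)}\otimes\dotsb\otimes x_{(i)}$, the element $e^{(i)}(x)$ is a sum of shuffle products $e^{(1)}(x_{(1)})\bullet\dotsb\bullet e^{(1)}(x_{(i)})$. Because $e^{(1)}$ preserves weight and vanishes in weight $0$ (there $J=0$), every nonzero factor $e^{(1)}(x_{(j)})$ lies in $I$, so each surviving summand lies in $I^{\bullet i}\subseteq I^{\bullet 2}$ for $i\geq 2$; thus $\Img(e^{(i)})\subseteq I^{\bullet 2}$. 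On the other hand the identity $id=e^{\star e^{(1)}}=\sum_{i\geq 0}e^{(i)}$ (a finite sum in each weight) restricts on $I$ to $id=\sum_{i\geq 1}e^{(i)}$, so any $x\in\Kr(e^{(1)})\cap I$ satisfies $x=\sum_{i\geq 2}e^{(i)}(x)\in I^{\bullet 2}$. Together with the first step this yields $\Kr(e^{(1)})\cap I=I^{\bullet 2}$, the only discrepancy being the weight-$0$ summand $A$, which is excluded by the usual reduced convention for the Harrison complex.

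It then remains to upgrade this degreewise isomorphism of graded modules to one of complexes. Here I would transport the Hochschild boundary along the bijection: the projection $\Hchains(A)\to\Hchains(A)/I^{\bullet 2}$ is a chain map by hypothesis (the boundary is a graded $\bullet$-derivation, hence preserves $I^{\bullet 2}$), and under the identification with $e^{(1)}\Hchains(A)$ the induced differential is $x\mapsto e^{(1)}\partial(x)$, which squares to zero because $\partial$ does; equivalently, once one knows that $e^{(1)}$ commutes with $\partial$, the subspace $e^{(1)}\Hchains(A)$ is a genuine subcomplex for the restricted boundary and the two differentials coincide. I expect the reverse inclusion $\Kr(e^{(1)})\subseteq I^{\bullet 2}$ — the statement that everything annihilated by the first eulerian idempotent is decomposable for the shuffle product — to be the real heart of the proof, the remaining care being purely in the weight bookkeeping (the weight-$0$ term) and in verifying compatibility with the differential.
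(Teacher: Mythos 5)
Your argument is correct and follows essentially the same route as the paper: both proofs get $I^{\bullet 2}\subseteq\Kr(e^{(1)})$ from the derivation property of $e^{(1)}$ established in the preceding Proposition, get the reverse inclusion from the convolution series for the identity, and conclude via the idempotent splitting $\Hchains(A)=\Kr(e^{(1)})\oplus\Img(e^{(1)})=I^{\bullet 2}\oplus e^{(1)}\Hchains(A)$. The only cosmetic difference is that the paper deduces $\Kr(e^{(1)})=\Img(id-e^{(1)})\subseteq I^{\bullet 2}$ directly from the $\log^{\star}$ expansion (i.e.\ $a-e^{(1)}(a)\in I^{\bullet 2}$) rather than from $id=\sum_{i}e^{(i)}$ together with $\Img(e^{(i)})\subseteq I^{\bullet 2}$ for $i\geqslant 2$; your additional care about the weight-$0$ summand and the compatibility with the boundary goes beyond the paper's terser conclusion but does not change the substance.
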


\begin{proof}
Let $a,b \in \Hchains(A)$. Since $e^{(1)}$ is a derivation of $\mu_{sh}$ along $\un \varepsilon$, we have
\begin{equation*}
e^{(1)}(a \bullet b) = e^{(1)}(a) \varepsilon(b) + \varepsilon(a) e^{(1)}(b).
\end{equation*}
If $a,b \in I$ are elements in the ideal of augmentation, then $\varepsilon(a) = 0 = \varepsilon(b)$, thus $e^{(1)}(a \bullet b) = 0$, hence $I \bullet I \subset \Kr(e^{(1)})$. Also (Sweedler's summations implied)
\begin{equation*}
e^{(1)}(a) = \sum_{n \geqslant 1} (-1)^{n+1} \frac{id^\star}{n}(a) = a-\frac{1}{2} a^{(1)} \bullet a^{(2)} + \frac{1}{3} a^{(1)} \bullet a^{(2)} \bullet a^{(3)} - \dotsb
\end{equation*}
so $a-e^{(1)}(a) \in I \bullet I$, hence $\Img(id - e^{(1)}) = \Kr(e^{(1)}) \subset I \bullet I$.

So we have the decomposition
\[
\Hchains(A) = (id-e^{(1)})(A) \oplus e^{(1)}(A) = I^{\bullet 2} \oplus e^{(1)}(A)
\]
which gives the result.
\end{proof}

Barr's proof of \cite[Proposition 2.5]{Barr68} consists in a construction by induction of a sequence $e_n \in \korps[\mathfrak{S}_n]$ of idempotent maps commuting with the Hochschild boundary map and leaving the shuffle products $sh_{p,q}$ invariant. Here this proof use the property of the whole map $e^{(1)}$ of being a graded derivation. Note that $e^{(1)}_n = id - e_n$ in Barr's notation.

In particular, this gives two descriptions of Harrison cochains:
\[
\Harrcochains(A,M) = \Hom(\Hchains(A)/I^{\bullet 2},M) = \Hom(e^{(1)}(A),M)
\]
they can be viewed as maps $\Tens A \to A$ that cancel on shuffles, or invariant by the first eulerian idempotent.

\begin{remark}
The second Harrison module $\Harrcochains^2(A,M) = \{f : A^{\otimes 2} \to M, f(a,b)=f(b,a)\}$ consists of symmetric maps.
\end{remark}

\section{Commutative deformations} \label{Sec:CommDef}

Let $(A,\mu_0)$ be a commutative $\korps$-algebra. In \cite{FronKon07}, Fr{\o{}}nsdal defines commutative deformations. A formal, abelian $*$-product on $A$ is a commutative, associative product on the space $A[[\lambda]]$ of formal power series in the formal parameter $\lambda$ with coefficients in $A$, given by formal series
\begin{equation*}
f * g = \sum_{n \in \nat} \lambda^n \mu_n(f,g).
\end{equation*}

Associativity for $*$ is the condition $(f * g) * h = f * (g * h)$ or equivalently $A_n(f,g,h) = 0$ for all $n \in \nat$, where $A_n$ is the associator of order $n$ for $*$
\begin{equation*}
A_n(f,g,h) \coloneqq \sum_{k=0}^n \big(\mu_k(\mu_{n-k}(f,g),h) - \mu_k(f,\mu_{n-k}(g,h))\big).
\end{equation*}

For any product $\mu$, its associator $A(a,b,c) \coloneqq \mu(\mu(a,b),c)-\mu(a,\mu(b,c))$ satisfies
\begin{equation} \label{associator-identity}
0 = [\mu,A]_G(a,b,c,d) = \mu(A(a,b,c),d) + \mu(a,A(b,c,d)) - A(\mu(a,b),c,d) + A(a,\mu(b,c),d) - A(a,b,\mu(c,d))
\end{equation}
with $[~,~]_G$ the Gerstenhaber bracket. 
\\

Let $A = \sum_{n \in \nat} \lambda^n A_n$ be the associator of a $*$-product $*$. Suppose that $*$ is associative to order $r \geqslant 1$, \ie $A_0 = \dotsb = A_r = 0$.
Equation \eqref{associator-identity} at order $\lambda^{r+1}$ reads $0 = (\delta A_{r+1})(a,b,c,d)$, with $\delta$ the Hochschild coboundary, hence $A_{r+1}$ is a Hochschild $3$-cocycle. Moreover $A_{r+1} = -\delta \mu_{r+1} + A_{r+1}'$, where $A_{r+1}'$ is $A_{r+1}$ without the first and last term in the sum. This shows that $A_{r+1}'$ is also a $3$-cocycle, and $A_{r+1} = 0 \Leftrightarrow A_{r+1}' = \delta \mu_{r+1}$, so that $*$ is associative to order $r+1$ is equivalent to $A_{r+1}'$ being a $3$-coboundary.

This proves that the obstruction to promote associativity from order $r$ to order $r+1$ are in $\Hoch^3(A,A)$. Moreover, if $\mu_i,\ 1 \leqslant i \leqslant r$ are symmetric, then a direct computation shows that $A_{r+1}'$ is invariant by $e^{(1)}_3$, so the obstructions to extend a formal abelian $*$-product to higher orders are more precisely in $\Harr^3(A,A)$. \\

Barr showed that Harrison cohomology is included in Hochschild cohomology, but it is already the case for the complexes as differential graded Lie algebras.

\begin{prop} \label{Prop:sub-dgL}
The Harrison complex of cochains $(\Harrcochains(A,A)[1],\delta,[~,~]_G)$ is a differential graded sub-Lie algebra of the Hochschild complex $(\Hcochains(A,A)[1],\delta,[~,~]_G)$.
\end{prop}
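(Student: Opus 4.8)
The plan is to realize both defining properties of a differential graded sub-Lie algebra --- stability under the bracket and under the differential --- through the classical dictionary between Hochschild cochains and coderivations of the tensor coalgebra. At the level of graded vector spaces the inclusion is the one already recorded after Barr: a Harrison cochain is an element of $\Hom(\Hchains(A)/I^{\bullet 2},A)$, that is, a Hochschild cochain vanishing on $I^{\bullet 2}$. I would then use that $(\Hcochains(A,A)[1],[~,~]_G)$ is isomorphic, as a graded Lie algebra, to the graded coderivations of $(\Tens(A[1]),\Delta)$ with the graded commutator, a cochain $f$ corresponding to the unique coderivation $D_f$ with $\pr_{A[1]}\circ D_f=f$ (cofreeness, as in \autoref{Sec:bialg}). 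Under this correspondence the Gerstenhaber bracket is exactly $[D_f,D_g]=D_fD_g-(-1)^{|f||g|}D_gD_f$, and the Hochschild differential is $\delta=[\mu_0,-]_G$, where $\mu_0$ is the commutative product of $A$ viewed as a $2$-cochain.

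The crux is a lemma placing the Harrison condition inside this picture: $f$ vanishes on $I^{\bullet 2}$ if and only if $D_f$ is a derivation of the shuffle product $\mu_{sh}$ on $\Tens(A[1])$. To prove it I would project the identity $D_f\circ\mu_{sh}=\mu_{sh}\circ(D_f\otimes id+id\otimes D_f)$ onto the generators through $\pr_{A[1]}$. Both sides are coderivations along the coalgebra morphism $\mu_{sh}$ --- this is where I invoke that $\mu_{sh}$ is a morphism for $\Delta$ and that $D_f$ is a $\Delta$-coderivation, exactly the toolbox computations of \autoref{Sec:bialg} --- so by cofreeness they coincide as soon as their $\pr_{A[1]}$-projections do. Since $\pr_{A[1]}\circ\mu_{sh}=\pr_{A[1]}\otimes\varepsilon+\varepsilon\otimes\pr_{A[1]}$ and $\varepsilon\circ D_f=0$, the projected identity reads $f(u\bullet v)=\varepsilon(v)f(u)+\varepsilon(u)f(v)$; restricting $u,v$ to positive degree gives precisely $f|_{I^{\bullet 2}}=0$. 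This is the one-degree-down analogue of the Proposition showing that $e^{(1)}$ is a $\mu_{sh}$-derivation.

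With the lemma the conclusion is formal. Graded coderivations of $\Delta$ are closed under the commutator, and so are graded derivations of $\mu_{sh}$; hence their intersection --- the biderivations of the bialgebra $(\Tens(A[1]),\mu_{sh},\Delta)$ --- is a graded sub-Lie algebra. The lemma identifies this intersection with the image of $\Harrcochains(A,A)[1]$ under $f\mapsto D_f$, so $\Harrcochains(A,A)[1]$ is stable under $[~,~]_G$. For the differential, $\mu_0$ is symmetric, hence a Harrison $2$-cochain, so $D_{\mu_0}$ is a biderivation; therefore $\delta f=[\mu_0,f]_G$ is a biderivation whenever $f$ is, which gives stability under $\delta$. (Alternatively, $\delta$-stability is dual to the fact, already used to define $\Harrchains(A)$, that the Hochschild boundary preserves $I^{\bullet 2}$.)

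I expect the only genuine obstacle to be the lemma, and within it the verification that \emph{both} sides of the shuffle-derivation identity are coderivations along $\mu_{sh}$, so that cofreeness legitimately reduces the check to the projection onto $A[1]$; the Koszul signs introduced by the shift $A[1]$, both in $\mu_{sh}$ and in the Gerstenhaber bracket, must be carried consistently throughout. Everything else --- that graded commutators of (co)derivations remain (co)derivations, and that $\mu_0$ is Harrison --- is routine given the earlier sections.
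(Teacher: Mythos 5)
Your proposal is correct and follows essentially the same route as the paper: the heart of both arguments is the same lemma --- a Harrison cochain induces a derivation of $\mu_{sh}$ --- proved the same way, by observing that both sides of the derivation identity are coderivations along the coalgebra morphism $\mu_{sh}$ and comparing their projections onto the cogenerators via cofreeness. The only (harmless) differences are in the packaging: you upgrade the lemma to an equivalence and deduce bracket-closure abstractly from the fact that biderivations of $(\Tens(A[1]),\mu_{sh},\Delta)$ form a sub-Lie algebra, where the paper instead evaluates $[f,g]_G$ directly on shuffles $a\bullet b$; and you make explicit the $\delta$-stability via $\mu_0$ being a symmetric, hence Harrison, $2$-cochain, a point the paper leaves implicit.
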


We first prove the following lemma, see also \cite[A.1]{BGHHW05}.

\begin{lemma}
Cochains of $\Harrcochains(A,A)$ induces derivations of $(\Tens A,\mu_{sh})$.
\end{lemma}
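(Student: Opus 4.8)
The plan is to realise each Harrison cochain as the corestriction to generators of a canonical coderivation of the cofree coalgebra $(\Tens A,\Delta)$, and then to show that this coderivation is automatically a derivation for the shuffle product, the vanishing of the cochain on $I^{\bullet 2}$ being exactly the compatibility that is needed. By linearity it suffices to treat a homogeneous cochain $f : A^{\otimes p} \to A$, which I extend by $0$ outside degree $p$ to a map $(\Tens A)^+ \to A$ vanishing on $I^{\bullet 2} = \Kr(e^{(1)})$. Applying the cofreeness of $(\Tens A,\Delta)$ with $\phi = \pr_A$, so that $\overline{\phi} = id$, I obtain the unique graded coderivation $\overline{f} : \Tens A \to \Tens A$ along $id$ with $\pr_A \circ \overline{f} = f$ (given by $\overline{f} = id \star f \star id$ for the concatenation convolution, i.e.\ applying $f$ to each string of $p$ consecutive letters, with Koszul signs). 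This $\overline{f}$ is the candidate derivation, and the goal is to prove
\[
\overline{f} \circ \mu_{sh} = \mu_{sh} \circ (\overline{f} \otimes id + id \otimes \overline{f}),
\]
which is precisely the graded derivation identity along $id$ recorded in \autoref{Sec:bialg}.

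The crux is to notice that \emph{both} sides of this identity are graded coderivations from $(\Tens A \otimes \Tens A, \Delta^{[2]})$ to $(\Tens A,\Delta)$ along the \emph{same} coalgebra morphism $\mu_{sh}$. Indeed $\mu_{sh}$ is a coalgebra morphism for $\Delta^{[2]}$ and $\Delta$ by the bialgebra structure $(\Tens A,\mu_{sh},\Delta,\un,\varepsilon)$; the left-hand side is then the composite of the coderivation $\overline{f}$ with this morphism, while the right-hand side is the composite of $\mu_{sh}$ with the coderivation $\overline{f} \otimes id + id \otimes \overline{f}$ of the tensor-square coalgebra. Since a coderivation along a fixed coalgebra morphism is determined by its corestriction $\pr_A \circ (-)$ (the uniqueness clause of the universal property for $(\Tens A,\Delta)$, with $\mu_{sh}$ coinduced by $\pr_A \otimes \varepsilon + \varepsilon \otimes \pr_A$), it is enough to compare the two sides after applying $\pr_A$.

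Computing the two projections then closes the argument. On the left, $\pr_A \circ \overline{f} \circ \mu_{sh} = f \circ \mu_{sh}$; since $\mu_{sh}$ sends a pair of positive-degree factors into $I^{\bullet 2}$, on which $f$ vanishes, only the terms with a degree-$0$ factor survive and one gets $f \otimes \varepsilon + \varepsilon \otimes f$. On the right, $\pr_A \circ \mu_{sh} = \pr_A \otimes \varepsilon + \varepsilon \otimes \pr_A$ (the very map coinducing $\mu_{sh}$), while $\varepsilon \circ \overline{f} = 0$ because $\overline{f}$ has image in positive degrees; expanding $\pr_A \circ \mu_{sh} \circ (\overline{f} \otimes id + id \otimes \overline{f})$ likewise yields $f \otimes \varepsilon + \varepsilon \otimes f$. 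The two corestrictions coincide, so the two coderivations are equal and $\overline{f}$ is a derivation of $\mu_{sh}$. I expect the main obstacle to be the bookkeeping in the middle step: verifying carefully, with the correct Koszul signs, that $\mu_{sh} \circ (\overline{f} \otimes id + id \otimes \overline{f})$ is genuinely a coderivation along $\mu_{sh}$, which rests on $\Delta$ being multiplicative for $\mu_{sh}$ and on the compatibility of $\Delta^{[2]}$ with the tensor-product coderivation on $\Tens A \otimes \Tens A$.
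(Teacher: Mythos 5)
Your proof is correct and follows essentially the same route as the paper's: both arguments recognise the two sides of the derivation identity as coderivations along the coalgebra morphism $\mu_{sh}$, reduce to comparing their projections onto $A$ via the uniqueness clause of cofreeness, and observe that these projections agree precisely because the cochain vanishes on $I^{\bullet 2}$ and $\varepsilon$ vanishes on $I$. The only difference is cosmetic (you spell out the construction $\overline{f} = id \star f \star id$ and the reduction to homogeneous cochains, which the paper leaves implicit).
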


\begin{proof}
Let $d : A^{\otimes k} \to A$ be a cochain in $\Harrcochains^k(A,A) \subset \Hcochains^k(A,A)$. It induces $\overline{d}$, coderivation of $(\Tens A,\Delta)$. We want to show that it is also a derivation for $\mu_{sh}$, \ie
\begin{equation*}
\overline{d} \circ \mu_{sh} = \mu_{sh} \circ (\overline{d} \otimes id + id \otimes \overline{d}).
\end{equation*}
Since $\mu_{sh} : (\Tens A \otimes \Tens A, \Delta^{[2]}) \to (\Tens A,\Delta)$ is a coalgebra morphism, both sides of the equation are coderivation from $(\Tens A \otimes \Tens A,\Delta^{[2]})$ to $(\Tens A,\Delta)$ along $\mu_{sh}$.

Projecting on $A$, we have on the left-hand side $d \circ \mu_{sh}(a \otimes b) = d(a \bullet b)$ and on the right-hand side $(\pr_A \otimes \varepsilon + \varepsilon \otimes \pr_A) \circ (\overline{d} \otimes id + id \otimes \overline{d})(a \otimes b) = (d \otimes \varepsilon + \varepsilon \otimes d)(a \otimes b) = d(a) \varepsilon(b) + \varepsilon(a) d(b)$ because $\varepsilon \circ \overline{d} = 0$. But since $d$ vanishes on $I^{\bullet 2}$ and $\varepsilon$ on $I$, the two expressions are equal for all $a,b \in \Tens A$. Since the left and right-hand side are coderivations along $\mu_{sh}$ having the same projection, they must be equal by unicity.
\end{proof}

\begin{proof}[Proof of \autoref{Prop:sub-dgL}]
Let $f,g \in \Harrcochains(A,A)[1]$. Using the previous lemma, we have
\begin{align*}
[f,g]_G \circ \mu_{sh} & {}= f \circ \overline{g} \circ \mu_{sh} + (-1)^{|f| |g|} g \circ \overline{f} \circ \mu_{sh} \\
& {}= f \circ \mu_{sh} \circ (\overline{g} \otimes id + id \otimes \overline{g}) + (-1)^{|f| |g|} g \circ \mu_{sh} \circ (\overline{f} \otimes id + id \otimes \overline{f})
\intertext{thus}
[f,g]_G(a \bullet b) & {}= f \big(\overline{g}(a) \bullet b \pm a \bullet \overline{g}(b) \big) + (-1)^{|f| |g|} g \big( \overline{f}(a) \bullet b \pm a \bullet \overline{f}(b) \big) = 0
\end{align*}
hence the vanishing of $f$ and $g$ on $I^{\bullet 2}$ imply the one of $[f,g]_G$ on $I^{\bullet 2}$, so $\Harrcochains(A,A)[1]$ is closed for the Gerstenhaber bracket.
\end{proof}

We recall Kontsevich's notion of formality. For better readability, we note $\gradcomplex = \Hcochains(A,A)$ and $\gradcohom = \Hoch(A,A)$ the Hochschild complex and cohomology of $A$.

\begin{defi} 
The complex $\gradcomplex$ is called \emph{formal} if there is a $L_\infty$-quasi-isomorphism $\Phi : \Sym(\gradcohom[2]) \to \Sym(\gradcomplex[2])$ (morphism of differential graded coalgebras of degree $0$), \ie
\begin{equation} \label{L_inf_morphism}
(\Phi \otimes \Phi) \circ \Delta_{\Sym{} \gradcohom[2]} = \Delta_{\Sym{} \gradcomplex[2]} \circ \Phi \qquad \textrm{and} \qquad \overline{b+D} \circ \Phi = \Phi \circ \overline{d},
\end{equation}
such that the restriction $\Phi_1$ of $\Phi$ to $H[2]$ is a section. The map $\Phi$ is called a formality map.
\end{defi}

Here $b \coloneqq [\mu_0,~]_G$ is the same as the Hochschild coboundary $\delta$ up to a global sign. We recall that the projection of the Gerstenhaber bracket gives a graded Lie bracket $[~,~]_s$ on the shifted cohomology space $\gradcohom[1]$. The maps $D \coloneqq [~,~]_G[1]$ and $d \coloneqq [~,~]_s[1]$ denote the shifted brackets, which are symmetric; $\overline{D}$ and $\overline{d}$ are the induced coderivations on $\Sym(\gradcomplex[2])$ and $\Sym(\gradcohom[2])$.

By extension, we will say that an associative algebra $A$ is \emph{formal} if it is the case for its Hochschild complex $\gradcomplex$. For a commutative algebra $A$, we keep the same definition of formality, but now taking $\gradcomplex = \Harrcochains(A,A)$ and $\gradcohom = \Harr(A,A)$ the Harrison complex and cohomology of $A$.

\begin{theorem} \label{Thm:formality-deformation}
(commutative) formality $\Rightarrow$ (commutative) déformation
\end{theorem}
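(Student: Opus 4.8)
The plan is to transcribe Kontsevich's argument into the Harrison setting, using the principle that an $L_\infty$-quasi-isomorphism transports Maurer--Cartan elements. Everything rests on \autoref{Prop:sub-dgL}, which makes $\Harrcochains(A,A)[1]$ a differential graded Lie algebra, so that the Maurer--Cartan formalism applies to it. First I would identify the Maurer--Cartan elements of this Harrison DGLA with abelian $*$-products: a formal series $\gamma = \sum_{n\geqslant 1}\lambda^n\mu_n$ with each $\mu_n$ a Harrison $2$-cochain solves $\delta\gamma + \tfrac12[\gamma,\gamma]_G = 0$ precisely when all associators $A_n$ of the product $\mu_0+\gamma$ vanish, \ie when $\mu_0+\gamma$ is associative. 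Commutativity is automatic, since by the preceding Remark the space $\Harrcochains^2(A,A)$ consists exactly of the symmetric maps; and \autoref{Prop:sub-dgL} guarantees that the bracket of two such cochains stays in the Harrison complex, so that the equation is well posed inside the sub-DGLA (consistently with the fact, noted after \eqref{associator-identity}, that the obstructions already live in $\Harr^3(A,A)$). Dually, on the cohomology side $\gradcohom[1] = \Harr(A,A)[1]$, carrying the zero differential and the induced bracket $[~,~]_s$, a Maurer--Cartan element is just a formal $\pi \in \Harr^2(A,A)\otimes\lambda\korps[[\lambda]]$ with $[\pi,\pi]_s = 0$, a formal commutative Poisson structure.

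Next I would push such a $\pi$ through the formality map $\Phi$. Writing $\Phi_n$ for its Taylor coefficients, the defining $L_\infty$-relations \eqref{L_inf_morphism} are exactly what is needed to show that
\begin{equation*}
\gamma \coloneqq \sum_{n\geqslant 1}\frac{1}{n!}\,\Phi_n(\pi,\dotsc,\pi)
\end{equation*}
solves the Maurer--Cartan equation in $\gradcomplex[1] = \Harrcochains(A,A)[1]$ whenever $\pi$ solves it in $\gradcohom[1]$; I would derive this from \eqref{L_inf_morphism} directly, or cite the standard statement that $L_\infty$-morphisms send Maurer--Cartan elements to Maurer--Cartan elements. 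Since $\Phi$ takes values in $\Sym(\gradcomplex[2])$ built on the Harrison complex, each coefficient $\mu_n$ of $\gamma$ automatically lies in $\Harrcochains^2(A,A)$ and is therefore symmetric, so $\mu_0+\gamma$ is a genuine abelian $*$-product. As $\Phi_1$ is a section, the construction is nontrivial and recovers the prescribed infinitesimal deformation at first order, which gives the theorem.

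The hard part will be the bookkeeping behind the transfer formula: unwinding \eqref{L_inf_morphism} into explicit identities among the $\Phi_n$ and verifying, with all the degree shifts and Koszul signs coming from the two symmetric coalgebras $\Sym(\gradcohom[2])$ and $\Sym(\gradcomplex[2])$, that $\delta\gamma + \tfrac12[\gamma,\gamma]_G$ vanishes order by order in $\lambda$. Conceptually this is a faithful copy of the associative case; the genuinely new input is simply that working inside the Harrison sub-DGLA keeps every intermediate term symmetric, so the output product is commutative and not merely associative.
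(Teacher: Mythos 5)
Your proposal is correct and follows essentially the same route as the paper: transport of the Maurer--Cartan element $\lambda\pi$ through the $L_\infty$-morphism $\Phi$, with commutativity of the resulting $*$-product coming for free because the Taylor coefficients land in $\Harrcochains^2(A,A)$, whose elements are symmetric. The only presentational difference is that the paper handles the order-by-order bookkeeping you flag as ``the hard part'' by working with the grouplike element $e^{\bullet\lambda\pi}$: since $\Phi$ is a coalgebra morphism it sends grouplikes to grouplikes, so $\Phi(e^{\bullet\lambda\pi})=e^{\bullet\mu_*}$ with $\mu_*=\sum_n\tfrac{1}{n!}\Phi_n(\lambda\pi,\dotsc,\lambda\pi)$ exactly your $\gamma$, and the Maurer--Cartan equation falls out by projecting $(b+D)(e^{\bullet\mu_*})=0$ onto the cogenerators, with no explicit sign chase.
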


\begin{proof}
For associative algebras, the result goes back to Kontsevich \cite{K03}, with the given framework, it adapts well to commutative algebras. We follow here the presentation of \cite[pp 321--322]{BM08}.
Let $\pi\in \gradcohom^2[[\lambda]] = \gradcohom[2]^0[[\lambda]]$. We want to construct a formal associative (commutative) deformation $\mu=\mu_0+\mu_*$ where $\mu_* \coloneqq \sum_{r=1}^{\infty} \lambda^r \mu_r$ such that the cohomology class $[\mu_1]$ of $\mu_1$ is equal to $\pi$. A necessary condition for this is
\begin{equation*}
[\pi,\pi]_s = 0
\end{equation*}
so we suppose the chosen element $\pi$ satisfies it.

Consider $\Sym(\gradcohom[2])[[\lambda]]$ and $\Sym(\gradcomplex[2])[[\lambda]]$ as topological bialgebras (with respect to the $\lambda$-adic topology) with the canonical extension of all the structure maps. Note that the tensor product is no longer algebraic, but given by $\big(\Sym(\gradcohom[2]) \otimes \Sym(\gradcohom[2])\big)[[\lambda]]$. For a general graded vector space $V$ it can be easily seen that the group-like elements of $\Sym{V}[[\lambda]]$ are no longer exclusively given by $\un$, but by exponential functions of any primitive elements of degree zero, \ie they take the form $e^{\bullet \lambda v}$ with $v\in V^0[[\lambda]]$. The image $\Phi(e^{\bullet \lambda\pi})$ of the grouplike element $e^{\bullet \lambda \pi}$ in $\Sym(\gradcohom[2])[[\lambda]]$ under the formality map $\Phi$ is a grouplike element in $\Sym(\gradcomplex[2])[[\lambda]]$ and thus takes the form $e^{\bullet \mu_*}$ with $\mu_* \in \lambda \gradcomplex^2[[\lambda]]$. Since $[\pi,\pi]_s = 0$ it follows that $d(e^{\bullet \lambda\pi})=0$, and therefore $(b+D)(e^{\bullet \lambda\mu_*}) = 0$. Projecting this last equation to $\gradcomplex[2]^0[[\lambda]] = \gradcomplex^2[[\lambda]]$, we get the \emph{Maurer-Cartan Equation}
\begin{equation*}
0 = b\mu_* + \frac{1}{2} [\mu_*,\mu_*]_G = \frac{1}{2} [\mu_0 + \mu_*,\mu_0 + \mu_*]_G,
\end{equation*}
showing the associativity of $\mu = \mu_0 + \mu_*$. Hence $\mu \coloneqq \mu_0 + \mu_*$ is a formal associative deformation of the algebra $(A,\mu_0)$.
In the commutative case with $\gradcomplex = \Harrcochains(A,A)$, $\mu_* \in \lambda \gradcomplex^2[[\lambda]]$ is equivalent to the commutativity of $\mu_i$ for $i\geqslant 1$, so the resulting product $\mu$ is commutative.
\end{proof}

\bibliographystyle{amsalpha}
\bibliography{biblio-harrco}
\nocite{*}

\end{document}